\documentclass[12pt]{amsart}
\usepackage[margin=1.1in]{geometry}
\usepackage{setspace}
\usepackage{amsmath,amssymb,amsthm,amscd,mathrsfs,mathtools}
\usepackage{tikz-cd}
\usepackage[T1]{fontenc}
\usepackage[utf8]{inputenc}
\usepackage{lmodern}
\usepackage{enumitem}
\usepackage{xcolor}
\usepackage[colorlinks=true,linkcolor=blue,citecolor=blue,urlcolor=blue]{hyperref}
\usepackage{microtype}
\usepackage[nameinlink,noabbrev]{cleveref}

\newtheorem{theorem}{Theorem}[section]
\newtheorem{proposition}[theorem]{Proposition}
\newtheorem{lemma}[theorem]{Lemma}
\newtheorem{corollary}[theorem]{Corollary}

\newtheorem{definition}[theorem]{Definition}

\numberwithin{equation}{section}
\newcommand{\cO}{\mathcal{O}}

\newcommand{\etale}{\'etale}

\DeclareMathOperator{\At}{At}
\DeclareMathOperator{\End}{End}
\DeclareMathOperator{\Ext}{Ext}

\DeclareMathOperator{\longrightarrowt}{Tot}

\numberwithin{equation}{section}
\allowdisplaybreaks

\title[Logarithmic Atiyah classes and spectral data]{Spectral and Logarithmic Atiyah Classes for Higgs Bundles}
\author[P. Kumar]{Pradip Kumar}
\address{Department of Mathematics, Shiv Nadar University, Dadri 201314, Uttar Pradesh, India}
\email{pradip.kumar@snu.edu.in}
\author[S. R. R. Mohanty]{Sai Rasmi Ranjan Mohanty}
\address{Department of Mathematics and Statistics, Indian Institute of Technology Kanpur, Uttar Pradesh 208016, India}
\email{sairrm@iitk.ac.in}
\author[S. Rani]{Savita Rani}
\address{International Center for Theoretical Sciences, Tata Institute of Fundamental Research, Bengaluru-560089, India}
\email{mansavi.14@gmail.com}
\author[R. K. Singh]{Rahul Kumar Singh}
\address{Department of Mathematics, Indian Institute of Technology Patna, Bihta 801106, Bihar, India}
\email{rahulks@iitp.ac.in}
\keywords{Atiyah class, Higgs bundle, spectral curve, Beauville--Narasimhan--Ramanan correspondence, jet bundle, ramification}
\subjclass[2020]{14H60, 14F05, 14D20, 53C07}

\begin{document}
\begin{abstract}
For a regular semisimple Higgs bundle with a smooth spectral curve, we prove that, over the \etale\ locus, the Atiyah class of the underlying bundle is induced by the Atiyah class of the spectral line bundle and takes values in the centralizer of the Higgs field. Further, when the discriminant is reduced, we construct a logarithmic refinement across the branch divisor: the Atiyah class extends as a class with logarithmic poles and values in a natural regularized centralizer sheaf.
\end{abstract}

\maketitle
\section{Introduction}\label{sec:intro}

The Atiyah class of a holomorphic vector bundle is a basic cohomological invariant of its holomorphic structure. Introduced by Atiyah through the first jet sequence \cite{Atiyah57}, it measures the obstruction to the existence of a holomorphic connection. Since then, Atiyah classes have played an important role in deformation theory, Riemann--Roch formalisms, and several related constructions in complex and algebraic geometry; see for instance \cite{Markarian09, HuybrechtsThomas10, Kapranov99}. For a vector bundle \(F\) on a smooth complex projective curve \(C\), the class
\[
\At(F)\,\,\in\,\, \Ext_C^1(F,\,\Omega_C^1\otimes F)\,\cong\, H^1(C,\,\Omega_C^1\otimes \End(F))
\]
is represented by the extension class of the first jet sequence
\[
0\longrightarrow \Omega_C^1\otimes F\longrightarrow J_C^1(F)\longrightarrow F\longrightarrow 0.
\]
A natural question is how this class behaves when a higher-rank bundle is obtained from a simpler covering or spectral data.

Higgs bundles provide a natural setting for this question. Since the foundational work of Hitchin and Simpson \cite{Hitchin87, Simpson92, Simpson94}, Higgs bundles on a smooth projective curve have been studied via the Hitchin fibration, spectral curves, and non-abelian Hodge theory. In the regular semisimple case, the Beauville--Narasimhan--Ramanan correspondence identifies a Higgs bundle with line bundle data on its spectral curve \cite{BNR89}; see also Donagi's broader spectral-cover viewpoint \cite{Donagi95}. Concretely, if \((E,\,\,\theta)\) is a Higgs bundle on \(X\) with smooth spectral curve
\[
\pi\,\colon\, Y\hookrightarrow \longrightarrowt(\Omega_X^1)\longrightarrow X,
\]
then
\[
E\,\cong\, \pi_*L
\]
for a line bundle \(L\) on \(Y\), and the Higgs field is recovered from the tautological section on the total space of \(\Omega_X^1\).

This spectral description turns many questions about Higgs bundles into questions about the line bundle \(L\) on \(Y\). It is therefore natural to ask whether the Atiyah class of the underlying bundle \(E\) can also be read directly from the spectral line bundle. The basic difficulty is that the relevant classes live in different spaces and with different coefficient sheaves:
\[
\At(E)\,\in\, H^1(X,\,\,\Omega_X^1\otimes \End(E)),
\qquad
\At(L)\,\in\, H^1(Y,\,\,\Omega_Y^1).
\]
Thus, before comparing them, one needs a precise relation between the first jet sequence of \(E=\pi_*L\) on \(X\) and the pushed-forward first jet sequence of \(L\) on \(Y\).

Our first main result provides exactly this bridge. Let \(\pi\colon Y\longrightarrow X\) be a finite morphism of smooth complex projective curves, let \(L\) be a line bundle on \(Y\), and set \(E:=\pi_*L\). In Proposition \ref{prop:jet-comparison}, we construct a canonical morphism
\[
\Psi_{\pi,\,L}\colon J_X^1(E)\longrightarrow \pi_*J_Y^1(L)
\]
compatible with the two jet sequences, and we prove that it fits into a short exact sequence
\[
0\longrightarrow J_X^1(E)\xrightarrow{\ \Psi_{\pi,\,L}\ }
\pi_*J_Y^1(L)\longrightarrow \pi_*(\Omega_{Y/X}^1\otimes L)\longrightarrow 0.
\]
Thus, the discrepancy between first jets on \(X\) and pushed-forward first jets on \(Y\) is measured exactly by the ramification sheaf \(\pi_*(\Omega_{Y/X}^1\otimes L)\).

From this comparison, one obtains a canonical extension class determined only by the spectral data:
\[
\beta_{\mathrm{spec}}(L,\,\pi)\,\in\,
\Ext_X^1\bigl(E,\,\,\pi_*(\Omega_Y^1\otimes L)\bigr),
\]
represented by the pushed-forward jet sequence of \(L\). We call this the \emph{spectral Atiyah class}. Corollary \ref{cor:global-bridge} shows that the ordinary Atiyah class of \(E\) maps to \(\beta_{\mathrm{spec}}(L,\,\pi)\) through the natural coefficient morphism
\[
\iota\,\colon\, \Omega_X^1\otimes E\longrightarrow \pi_*(\Omega_Y^1\otimes L).
\]
In other words, the spectral Atiyah class is precisely the image of the ordinary Atiyah class after changing coefficients from differentials on \(X\) to pushed-forward differentials on \(Y\).

We then specialize to the case of regular semisimple Higgs bundles. Let \(\Delta\subset X\) be the discriminant divisor, set
\[
X^\circ\,:=\,X\setminus \Delta,
\qquad
Y^\circ\,:=\,\pi^{-1}(X^\circ),
\]
and write
\[
j\,\colon\, X^\circ\hookrightarrow X,
\qquad
\pi^\circ\colon Y^\circ\longrightarrow X^\circ.
\]
Then \(\pi^\circ\) is finite \etale. On this open set, the restricted spectral class admits a scalar form
\[
\beta^\circ_{\mathrm{spec}}(L,\,\pi)\,\in\,
H^1\bigl(X^\circ,\,\Omega_{X^\circ}^1\otimes \pi^\circ_*\cO_{Y^\circ}\bigr),
\]
obtained from \(\At(L^\circ)\) using the identifications
\[
\Omega_{Y^\circ}^1\,\cong\, \pi^{\circ *}\Omega_{X^\circ}^1,
\qquad
\End(L^\circ)\,\cong\, \cO_{Y^\circ}.
\]
Thus \(\beta^\circ_{\mathrm{spec}}(L,\,\pi)\) is the scalar form of the restriction of the global spectral Atiyah class to the \etale\ locus.

\medskip 

Our second main result, Theorem \ref{thm:etale-invariant}, shows that on \(X^\circ\) the Atiyah class of \(E\) is induced from this scalar spectral class through the spectral action:
\[
(\operatorname{id}\otimes m^\circ)_*
\beta^\circ_{\mathrm{spec}}(L,\,\pi)
=
j^*\At(E)
\]
in
\[
H^1\bigl(X^\circ,\Omega_{X^\circ}^1\otimes \End(E|_{X^\circ})\bigr).
\]
Equivalently, away from the discriminant, the Atiyah class of the underlying bundle is completely governed by the spectral line bundle and takes values, through the spectral action, in the centralizer of the Higgs field.

\medskip

The previous statement naturally suggests a further question: whether the spectral description extends across the discriminant in a controlled way. Our third main result answers this under the additional assumption that the discriminant divisor is reduced. In Theorem \ref{thm:logarithmic-centralizer}, we construct a canonical logarithmic spectral Atiyah class
\[
\At^{\log}_{\theta}(E)\,\in\,
H^1\bigl(X,\,\,\Omega_X^1(\log \Delta)\otimes \mathcal{Z}_\theta\bigr),
\]
where \(\mathcal{Z}_\theta\subset \End(E)\) is the regularized centralizer sheaf obtained by saturating the image of the spectral algebra \(\pi_*\cO_Y\) inside \(\End(E)\). This sheaf agrees over \(X^\circ\) with the usual centralizer of the Higgs field and gives a natural extension of it across the discriminant.

The logarithmic class is compatible with the ordinary Atiyah class in the following global sense:
\[
(k\otimes \operatorname{id}_{\End(E)})_*\At(E)
=
(\operatorname{id}\otimes q)_*\At^{\log}_{\theta}(E)
\]
in
\[
H^1\bigl(X,\,\Omega_X^1(\log\Delta)\otimes \End(E)\bigr),
\]
where
\[
k:\Omega_X^1\hookrightarrow \Omega_X^1(\log\Delta),
\qquad
q:\mathcal Z_\theta\hookrightarrow \End(E)
\]
are the natural inclusions. After restriction to \(X^\circ\), this logarithmic class recovers the \etale\ spectral description of the ordinary Atiyah class:
\[
(\operatorname{id}\otimes q^\circ)_*j^*\At^{\log}_{\theta}(E)
=
(\operatorname{id}\otimes m^\circ)_*
\beta^\circ_{\mathrm{spec}}(L,\,\pi)
=
j^*\At(E).
\]
Thus, the logarithmic class extends the abelianized spectral description across the branch divisor, with only logarithmic singularities and with coefficients in the regularized centralizer.

\medskip 

The picture that emerges is the following. On the \etale\ locus, the Atiyah class of the underlying bundle is completely abelianized by the spectral correspondence. Globally, the failure of this abelianized description at the level of ordinary first jets is measured by the ramification sheaf \(\pi_*(\Omega_{Y/X}^1\otimes L)\). When the discriminant is reduced, this failure admits a canonical logarithmic reformulation: the ordinary Atiyah class, after allowing logarithmic poles along the discriminant, is represented by a class with values in the regularized centralizer sheaf.

\medskip 

In Section \ref{sec:jets} we prove the jet comparison theorem, define the spectral Atiyah class, and relate it to the ordinary Atiyah class of \(\pi_*L\). In Section \ref{sec:spectral} we specialize to regular semisimple Higgs bundles with smooth spectral curve, describe the \etale\ locus, and show that there the Atiyah class is induced from the scalar Atiyah class of the spectral line bundle through the spectral action. In Section \ref{sec:logarithmic} we study the extension across the discriminant: assuming the discriminant is reduced, we construct the logarithmic class \(\At^{\log}_\theta(E)\) with values in the regularized centralizer sheaf and prove its global compatibility with the ordinary Atiyah class after passing to logarithmic coefficients.
\section{Atiyah classes, jets, and finite pushforward}\label{sec:jets}
For a vector bundle \(F\) on a smooth complex projective curve \(C\), let
\begin{equation}\label{eq:jet-sequence}
0\longrightarrow \Omega_C^1\otimes F \longrightarrow J_C^1(F)\longrightarrow F\longrightarrow 0
\end{equation}
be its first jet sequence. The associated extension class is the Atiyah class
\[
\At(F)\,\in\, \Ext_C^1(F,\,\Omega_C^1\otimes F)\,\cong\, H^1\!\left(C,\,\Omega_C^1\otimes \End(F)\right).
\]
If \(F\) is a line bundle, this class lies in \(H^1(C,\,\Omega_C^1)\).

\smallskip 

Now fix a finite morphism
\(
\pi\,\colon\, Y\longrightarrow X
\)
of smooth complex projective curves, a line bundle \(L\) on \(Y\), and set
\[
E\,:=\,\pi_*L.
\]
Since \(\pi\) is finite and \(X,\,\,Y\) are smooth curves, \(E\) is a vector bundle on \(X\). The natural map of differentials
\[
\pi^*\Omega_X^1\longrightarrow \Omega_Y^1
\]
is injective, so after tensoring with \(L\) and pushing forward, the projection formula gives a canonical morphism
\[
\iota\,\,\colon\,\, \Omega_X^1\otimes E
\;\,\cong\,\;
\pi_*(\pi^*\Omega_X^1\otimes L)
\hookrightarrow
\pi_*(\Omega_Y^1\otimes L).
\]

\begin{proposition}\label{prop:jet-comparison}
With notation as above, there is a canonical morphism
\(
\Psi_{\pi,\,L}\,\colon\, J_X^1(E)\longrightarrow \pi_*J_Y^1(L)
\)
such that the diagram
\begin{equation}\label{eq:comparison-diagram}
\begin{CD}
0 @>>> \Omega_X^1\otimes E @>>> J_X^1(E) @>>> E @>>> 0\\
@. @V{\iota}VV @V{\Psi_{\pi,\,L}}VV @| @.\\
0 @>>> \pi_*(\Omega_Y^1\otimes L) @>>> \pi_*J_Y^1(L) @>>> E @>>> 0
\end{CD}
\end{equation}
commutes, where both rows are exact. Moreover, there is  a short exact sequence
\begin{equation}\label{eq:jet-comparison-exact}
0\longrightarrow J_X^1(E)\xrightarrow{\ \Psi_{\pi,\,L}\ }
\pi_*J_Y^1(L)\longrightarrow \pi_*(\Omega_{Y/X}^1\otimes L)\longrightarrow 0.
\end{equation}
\end{proposition}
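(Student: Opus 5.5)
The plan is to build $\Psi_{\pi,L}$ from the functoriality of first-order principal parts, and then get the exact sequence \eqref{eq:jet-comparison-exact} from the snake lemma. The whole argument rests on one identification. Let $P^1_C:=p_{1*}\bigl(\cO_{C\times C}/I_{\Delta_C}^2\bigr)$ be the sheaf of first-order principal parts, a sheaf of $\cO_C$-algebras with two $\cO_C$-module structures (left and right). Then for a vector bundle $F$,
\[
J^1_C(F)\cong P^1_C\otimes_{\cO_C}F ,
\]
where the tensor product uses the right structure and the result is regarded as an $\cO_C$-module through the left structure. The jet sequence \eqref{eq:jet-sequence} is obtained by tensoring $0\to\Omega^1_C\to P^1_C\to\cO_C\to 0$ with $F$.

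\emph{Construction.} The morphism $\pi\times\pi\colon Y\times Y\to X\times X$ sends $\Delta_Y$ into $\Delta_X$. Hence $(\pi\times\pi)^{-1}I_{\Delta_X}\cdot\cO_{Y\times Y}\subset I_{\Delta_Y}$, and the same holds for squares. This gives a ring homomorphism $\pi^{-1}P^1_X\to P^1_Y$, which I write as $\pi^\#$. It is compatible with $\pi^\#\colon\pi^{-1}\cO_X\to\cO_Y$ for both module structures. On the ideal $\Omega^1_X\subset P^1_X$ it is $d\pi$, and on the quotients $\cO$ it is $\pi^\#$.

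On an open set $U\subset X$, define
\[
\Psi\bigl(a\otimes s\bigr)=\pi^\#(a)\otimes s
\]
for $a\in P^1_X(U)$ and $s\in E(U)=L(\pi^{-1}U)$. This lands in $P^1_Y(\pi^{-1}U)\otimes_{\cO_Y} L=\pi_*J^1_Y(L)(U)$. The map is balanced because, for $f\in\cO_X$, $\pi^\#(a\cdot f)\otimes s=\pi^\#(a)\cdot\pi^*f\otimes s=\pi^\#(a)\otimes (\pi^*f)s$, and the $\cO_X$-action on $E$ is exactly multiplication by $\pi^*f$. It is left $\cO_X$-linear because $\pi^\#$ respects the left structures. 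Canonicity is automatic, since no choices are made.

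\emph{Commutativity of \eqref{eq:comparison-diagram}.} On the subsheaf $\Omega^1_X\otimes E$, the map $\Psi$ is $d\pi\otimes\mathrm{id}_L$ composed with the projection-formula isomorphism, which is $\iota$ by definition. On quotients, it is $\pi^\#(1)\otimes s=1\otimes s$, so it induces the identity on $E$. The bottom row is exact because the jet sequence of $L$ on $Y$ is exact and $\pi_*$ is exact, $\pi$ being finite (hence affine).

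\emph{Exactness of \eqref{eq:jet-comparison-exact}.} The sequence $0\to\pi^*\Omega^1_X\to\Omega^1_Y\to\Omega^1_{Y/X}\to0$ is exact: the first map is injective because $\pi$ is a finite, hence generically étale, morphism of smooth curves in characteristic $0$. Tensoring with the line bundle $L$ and pushing forward preserves exactness. So $\iota$ is injective with cokernel $\pi_*(\Omega^1_{Y/X}\otimes L)$. Now apply the snake lemma to \eqref{eq:comparison-diagram}. The right vertical map is the identity, so $\ker\Psi=\ker\iota=0$ and $\operatorname{coker}\Psi\cong\operatorname{coker}\iota$. This yields \eqref{eq:jet-comparison-exact}.

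The main obstacle is the bookkeeping in the construction step: checking carefully that $\pi^\#$ on principal parts respects the correct (right versus left) module structures, so that $\Psi$ is well defined and $\cO_X$-linear. If the intrinsic argument becomes confusing, I would verify it in local coordinates. Write $y^e=x$ near a ramification point, and describe $J^1$ as $F\oplus\Omega^1\otimes F$ with the twisted action $f\cdot(s,\omega)=(fs,\,f\omega+df\otimes s)$. Then check directly that $\Psi(s,\omega)=(s,\,\pi^*\omega)$ intertwines the twisted actions, using $d(\pi^*f)=\pi^*df$.
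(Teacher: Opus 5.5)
Your proposal is correct and follows essentially the same route as the paper. Your map $\Psi(a\otimes s)=\pi^\#(a)\otimes s$ is exactly the adjoint of the paper's composite $\pi^*J^1_X(E)\to J^1_Y(\pi^*E)\to J^1_Y(L)$, and your principal-parts homomorphism $\pi^{-1}P^1_X\to P^1_Y$ makes explicit the ``functoriality of jets'' that the paper invokes without detail; the remaining steps are the same as in the paper, namely identifying the left arrow with $\iota$, using injectivity of $d\pi$ to get $\operatorname{coker}\iota\cong\pi_*(\Omega^1_{Y/X}\otimes L)$, and concluding with the snake lemma.
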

\begin{proof}
Let
\(
\varepsilon\,\colon\, \pi^*E\,=\,\pi^*\pi_*L\longrightarrow L
\)
be a canonical map. We first construct the morphism
\[
\Psi_{\pi,\,L}\,\colon\, J_X^1(E)\longrightarrow \pi_*J_Y^1(L).
\]

Since \(X\) and \(Y\) are smooth curves and \(\pi\,\colon\, Y\longrightarrow X\) is finite, the morphism
\(\pi\) is flat. Hence pulling back a short exact sequence of locally free sheaves on \(X\)
remains exact on \(Y\). Applying \(\pi^*\) to the first jet sequence of \(E\),
\[
0\longrightarrow \Omega_X^1\otimes E
\longrightarrow J_X^1(E)
\longrightarrow E
\longrightarrow 0,
\]
we obtain an exact sequence on \(Y\):
\[
0\longrightarrow \pi^*\Omega_X^1\otimes \pi^*E
\longrightarrow \pi^*J_X^1(E)
\longrightarrow \pi^*E
\longrightarrow 0.
\]
Next, the differential of \(\pi\) gives a natural morphism of sheaves
\[
d\pi\,\colon\, \pi^*\Omega_X^1\longrightarrow \Omega_Y^1.
\]
Tensoring with \(\pi^*E\) yields
\[
d\pi\otimes \mathrm{id}_{\pi^*E}\,\colon\,
\pi^*\Omega_X^1\otimes \pi^*E\longrightarrow \Omega_Y^1\otimes \pi^*E.
\]
By the functoriality of the jet bundle construction, this induces a morphism of exact
sequences
\[
\begin{CD}
0 @>>> \pi^*\Omega_X^1\otimes \pi^*E @>>> \pi^*J_X^1(E) @>>> \pi^*E @>>> 0\\
@. @V{d\pi\otimes \mathrm{id}}VV @VVV @| @.\\
0 @>>> \Omega_Y^1\otimes \pi^*E @>>> J_Y^1(\pi^*E) @>>> \pi^*E @>>> 0.
\end{CD}
\]

\medskip

Now apply functoriality once more to the morphism
\(
\varepsilon\,\colon\, \pi^*E\longrightarrow L.
\)
It induces a morphism of jet sequences
\[
\begin{CD}
0 @>>> \Omega_Y^1\otimes \pi^*E @>>> J_Y^1(\pi^*E) @>>> \pi^*E @>>> 0\\
@. @V{\mathrm{id}\otimes \varepsilon}VV @VVV @V{\varepsilon}VV @.\\
0 @>>> \Omega_Y^1\otimes L @>>> J_Y^1(L) @>>> L @>>> 0.
\end{CD}
\]

Composing the two vertical morphisms in the middle column, we obtain a canonical morphism
on \(Y\),
\[
\Phi\,\colon\, \pi^*J_X^1(E)\longrightarrow J_Y^1(L).
\]
By adjunction, \(\Phi\) corresponds uniquely to a morphism on \(X\),
\[
\Psi_{\pi,\,L}\,\colon\, J_X^1(E)\longrightarrow \pi_*J_Y^1(L).
\]
This is the desired map.

\medskip

We now identify the induced map on the left-hand terms. The morphism on the left in the
composite diagram is
\[
\pi^*\Omega_X^1\otimes \pi^*E
\xrightarrow{\,d\pi\otimes \mathrm{id}\,}
\Omega_Y^1\otimes \pi^*E
\xrightarrow{\,\mathrm{id}\otimes \varepsilon\,}
\Omega_Y^1\otimes L.
\]
Using the canonical associativity isomorphism
\(
\pi^*\Omega_X^1\otimes \pi^*E
\,\cong\,
\pi^*\Omega_X^1\otimes \pi^*\pi_*L
\)
and the adjunction map \(\varepsilon\,\colon\, \pi^*\pi_*L\longrightarrow L\), this factors as
\[
\pi^*\Omega_X^1\otimes \pi^*E
\longrightarrow
\pi^*\Omega_X^1\otimes L
\xrightarrow{\,d\pi\otimes \mathrm{id}_L\,}
\Omega_Y^1\otimes L.
\]
Pushing forward to \(X\), and using the projection formula,
\[
\pi_*(\pi^*\Omega_X^1\otimes L)
\,\cong\,
\Omega_X^1\otimes \pi_*L
\,=\,
\Omega_X^1\otimes E,
\]
we obtain a canonical morphism
\[
\iota\,\colon\, \Omega_X^1\otimes E\longrightarrow \pi_*(\Omega_Y^1\otimes L).
\]
By construction, this is exactly the left vertical arrow induced by \(\Psi_{\pi,\,L}\).
Therefore \(\Psi_{\pi,\,L}\) fits into the commutative diagram
\[
\begin{CD}
0 @>>> \Omega_X^1\otimes E @>>> J_X^1(E) @>>> E @>>> 0\\
@. @V{\iota}VV @V{\Psi_{\pi,\,L}}VV @| @.\\
0 @>>> \pi_*(\Omega_Y^1\otimes L) @>>> \pi_*J_Y^1(L) @>>> E @>>> 0,
\end{CD}
\]
where the top row is the jet sequence on \(X\), and the bottom row is obtained by pushing
forward the jet sequence
\[
0\longrightarrow \Omega_Y^1\otimes L
\longrightarrow J_Y^1(L)
\longrightarrow L
\longrightarrow 0
\]
along the finite morphism \(\pi\). Since \(\pi\) is finite, \(\pi_*\) is exact, so the
bottom row is also exact.

\medskip

It remains to prove that \(\Psi_{\pi,\,L}\) is injective and to identify its cokernel. For a finite morphism of smooth curves, the standard exact sequence of K\"ahler
differentials is
\[
0\longrightarrow \pi^*\Omega_X^1
\xrightarrow{\,d\pi\,}
\Omega_Y^1
\longrightarrow \Omega_{Y/X}^1
\longrightarrow 0.
\]
Because \(L\) is a line bundle, tensoring with \(L\) preserves exactness, and we get
\[
0\longrightarrow \pi^*\Omega_X^1\otimes L
\longrightarrow \Omega_Y^1\otimes L
\longrightarrow \Omega_{Y/X}^1\otimes L
\longrightarrow 0.
\]
Applying \(\pi_*\), which is exact for finite morphisms, we obtain
\[
0\longrightarrow \pi_*(\pi^*\Omega_X^1\otimes L)
\longrightarrow \pi_*(\Omega_Y^1\otimes L)
\longrightarrow \pi_*(\Omega_{Y/X}^1\otimes L)
\longrightarrow 0.
\]
Using the projection formula once again,
\[
\pi_*(\pi^*\Omega_X^1\otimes L)\,\cong\, \Omega_X^1\otimes \pi_*L\,=\,\Omega_X^1\otimes E,
\]
this becomes
\[
0\longrightarrow \Omega_X^1\otimes E
\xrightarrow{\ \iota\ }
\pi_*(\Omega_Y^1\otimes L)
\longrightarrow \pi_*(\Omega_{Y/X}^1\otimes L)
\longrightarrow 0.
\]

Now apply the snake lemma to the commutative diagram
\[
\begin{CD}
0 @>>> \Omega_X^1\otimes E @>>> J_X^1(E) @>>> E @>>> 0\\
@. @V{\iota}VV @V{\Psi_{\pi,\,L}}VV @| @.\\
0 @>>> \pi_*(\Omega_Y^1\otimes L) @>>> \pi_*J_Y^1(L) @>>> E @>>> 0.
\end{CD}
\]
Since the rightmost vertical map is the identity on \(E\), the snake lemma gives
\[
0\longrightarrow \ker(\Psi_{\pi,\,L})
\longrightarrow \ker(\mathrm{id}_E)\,=\,0,
\]
hence
\(
\ker(\Psi_{\pi,\,L})\,=\,0.
\)
So \(\Psi_{\pi,\,L}\) is injective. The same snake lemma gives an exact sequence
\[
0\longrightarrow \operatorname{coker}(\iota)
\longrightarrow \operatorname{coker}(\Psi_{\pi,\,L})
\longrightarrow \operatorname{coker}(\mathrm{id}_E)\,=\,0,
\]
so
\(
\operatorname{coker}(\Psi_{\pi,\,L})\,\cong\, \operatorname{coker}(\iota).
\)
From the exact sequence above for \(\iota\), we identify
\(
\operatorname{coker}(\iota)\,\cong\, \pi_*(\Omega_{Y/X}^1\otimes L).
\)
Therefore
\[
\operatorname{coker}(\Psi_{\pi,\,L})
\,\cong\,
\pi_*(\Omega_{Y/X}^1\otimes L).
\]

We conclude that there is a short exact sequence
\[
0\longrightarrow J_X^1(E)
\xrightarrow{\ \Psi_{\pi,\,L}\ }
\pi_*J_Y^1(L)
\longrightarrow \pi_*(\Omega_{Y/X}^1\otimes L)
\longrightarrow 0,
\]
as claimed.
\end{proof}
We define the \emph{spectral Atiyah class} of the spectral data \((L,\,\pi)\) by
\[
\beta_{\mathrm{spec}}(L,\,\pi)
\;:=\;
\pi_*\At(L)
\;\in\;
\Ext_X^1\bigl(E,\,\,\pi_*(\Omega_Y^1\otimes L)\bigr),
\qquad E=\pi_*L.
\]
Equivalently, \(\beta_{\mathrm{spec}}(L,\,\pi)\) is represented by the pushed-forward first jet sequence
\begin{equation}\label{eq:spectral-atiyah-class}
0 \longrightarrow \pi_*(\Omega_Y^1\otimes L)
\longrightarrow \pi_*J_Y^1(L)
\longrightarrow E
\longrightarrow 0.
\end{equation}

If \(u\colon A\longrightarrow   B\) is a morphism of sheaves on \(X\), let
\[
u_* \;:=\; \Ext_X^1(E,\,\,u)\colon \Ext_X^1(E,A)\longrightarrow \Ext_X^1(E,B)
\]
denote the induced map. In particular, for
\[
\iota\colon \Omega_X^1\otimes E \longrightarrow \pi_*(\Omega_Y^1\otimes L),
\]
the class \(\iota_*\At(E)\) is the image of \(\At(E)\) in
\[
\Ext_X^1\bigl(E,\,\,\pi_*(\Omega_Y^1\otimes L)\bigr).
\]

\begin{corollary}\label{cor:global-bridge}
With notation as above,
\begin{equation}\label{eq:global-bridge}
\iota_*\At(E)=\beta_{\mathrm{spec}}(L,\,\pi)
\qquad\text{in}\qquad
\Ext_X^1\bigl(E,\,\,\pi_*(\Omega_Y^1\otimes L)\bigr).
\end{equation}
\end{corollary}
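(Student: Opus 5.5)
The plan is to read the identity directly off the commutative diagram \eqref{eq:comparison-diagram} of Proposition \ref{prop:jet-comparison}, using the standard description of the pushout action of $\Ext^1$ on extensions. Recall that for an extension $0\to A\to M\to E\to 0$ with class $e\in\Ext_X^1(E,A)$ and a morphism $u\colon A\to B$, the class $u_*e=\Ext_X^1(E,u)(e)$ is represented by the pushout extension $0\to B\to B\oplus_A M\to E\to 0$. Moreover, any extension $0\to B\to N\to E\to 0$ that receives a morphism of extensions from $M$ with left vertical map $u$ and right vertical map $\mathrm{id}_E$ is canonically isomorphic to the pushout. I will apply this with $A=\Omega_X^1\otimes E$, $M=J_X^1(E)$, $B=\pi_*(\Omega_Y^1\otimes L)$, $N=\pi_*J_Y^1(L)$ and $u=\iota$.

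The key steps, in order, are as follows.

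First, the top row of \eqref{eq:comparison-diagram} represents $\At(E)$ by definition. The bottom row, which is exact because $\pi_*$ is exact for finite $\pi$, is exactly the sequence \eqref{eq:spectral-atiyah-class}, so it represents $\beta_{\mathrm{spec}}(L,\pi)$.

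Second, the universal property of the pushout applied to the pair $(\text{inclusion }B\to N,\ \Psi_{\pi,L}\colon M\to N)$ produces a map $B\oplus_A M\to N$. Commutativity of the left square of \eqref{eq:comparison-diagram} is what makes these two maps agree on $A$. The resulting map is compatible with the inclusions of $B$ and with the projections to $E$; for the projections one uses the right square, whose vertical map is $\mathrm{id}_E$.

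Third, the five lemma, applied to the diagram of extensions with identity maps on $B$ and on $E$, shows that this map is an isomorphism. Hence the two extensions define the same Yoneda class, which gives \eqref{eq:global-bridge}.

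The only point that needs care is the identification of $\pi_*\At(L)$ with the class of the pushed-forward sequence in $\Ext_X^1(E,\pi_*(\Omega_Y^1\otimes L))$. The paper treats this as the definition, so nothing further is needed there. One should also check that the convention for $u_*$ (Yoneda composition versus the Čech or $H^1$ description) is consistent with pushouts. I expect this step to be routine rather than a real obstacle: both descriptions agree by the naturality of the connecting map in the long exact $\Ext_X(E,-)$ sequence. Concretely, $\At(E)=\delta(\mathrm{id}_E)$, and naturality of $\delta$ along the morphism of sequences gives $\iota_*\delta_{\mathrm{top}}(\mathrm{id}_E)=\delta_{\mathrm{bottom}}(\mathrm{id}_E)$. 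This argument is an alternative one-line proof, and I would present it as a cross-check.
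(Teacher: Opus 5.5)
Your proposal is correct and follows the paper's proof. Both arguments read the identity off the morphism of extensions in Proposition~\ref{prop:jet-comparison}: since the right vertical map is $\mathrm{id}_E$, the bottom row is the pushout of the jet sequence of $E$ along $\iota$, so its class is $\iota_*\At(E)$. Your universal-property and five-lemma check, and the connecting-homomorphism cross-check, spell out the standard Yoneda fact that the paper invokes in one line.
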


\begin{proof}
By Proposition~\ref{prop:jet-comparison}, the first jet sequence of \(E\) and the sequence
\eqref{eq:spectral-atiyah-class} fit into a commutative diagram
\[
\begin{tikzcd}
0 \arrow[r] &
\Omega_X^1\otimes E \arrow[r] \arrow[d,"\iota"] &
J_X^1(E) \arrow[r] \arrow[d,"\Psi_{\pi,\,L}"] &
E \arrow[r] \arrow[d,equal] &
0 \\
0 \arrow[r] &
\pi_*(\Omega_Y^1\otimes L) \arrow[r] &
\pi_*J_Y^1(L) \arrow[r] &
E \arrow[r] &
0.
\end{tikzcd}
\]
Since the right vertical arrow is the identity on \(E\), the lower extension is the pushout
of the upper one along \(\iota\). Hence, its extension class is \(\iota_*\At(E)\). By
definition, the lower row represents \(\beta_{\mathrm{spec}}(L,\,\pi)\). This proves
\eqref{eq:global-bridge}.
\end{proof}

Thus the spectral Atiyah class is the image of the ordinary Atiyah class under the natural coefficient morphism \(\iota\). In the next section, we restrict this relation to the \etale\ locus and rewrite the restricted class in scalar form.

\section{Regular semisimple Higgs bundles and the \etale\ locus}\label{sec:spectral}
Let $X$ be a smooth complex projective curve, and let $(E,\,\,\theta)$ be a regular semisimple Higgs bundle on $X$ with smooth spectral curve
\begin{equation}\label{eq:spectral-cover}
\pi\,\colon\, Y\hookrightarrow \longrightarrowt(\Omega^{1}_{X})\longrightarrow X
\end{equation}
and spectral line bundle $L$ such that $E\,\cong\, \pi_{*}L$.

Let $\Delta\subset X$ be the discriminant divisor, set $X^{\circ}\,:=\,X\setminus \Delta$, and write $j\,\colon\, X^{\circ}\hookrightarrow X$ for the inclusion. Let $Y^{\circ}\,:=\,\pi^{-1}(X^{\circ})$ and denote by $\pi^{\circ}\,\colon\, Y^{\circ}\longrightarrow X^{\circ}$ the induced map. Then $\pi^{\circ}$ is finite \etale.

Multiplication on sections gives a natural morphism of sheaves of $\cO_{X}$-algebras
\begin{equation}\label{eq:multiplication-map}
m\,\colon\, \pi_{*}\cO_{Y}\longrightarrow \End(E).
\end{equation}
Its restriction to $X^{\circ}$ is the usual spectral action.

\begin{lemma}\label{lem:centralizer}
Over $X^{\circ}$ the multiplication map induces an isomorphism of sheaves of $\cO_{X^{\circ}}$-algebras
\[
m^{\circ}\,\colon\, \pi^{\circ}_{*}\cO_{Y^{\circ}}\xrightarrow{\sim} Z_{\End(E|_{X^{\circ}})}(\theta|_{X^{\circ}}),
\]
where $Z_{\End(E|_{X^{\circ}})}(\theta|_{X^{\circ}})$ denotes the centralizer sheaf of the Higgs field.
\end{lemma}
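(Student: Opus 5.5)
The proof works locally on $X^{\circ}$, étale-locally if needed, and uses two facts. First, $\pi^{\circ}$ is finite étale of degree $n=\operatorname{rank}E$. Second, $\theta$ is itself the image under $m$ of the tautological section.

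\textbf{Step 1: the image of $m$ lies in the centralizer.} Let $\lambda\in H^0(Y,\pi^*\Omega^1_X)$ be the restriction of the tautological section of $\longrightarrowt(\Omega^1_X)$. By the Beauville--Narasimhan--Ramanan construction, $\theta\colon E\to E\otimes\Omega^1_X$ is $\pi_*$ of multiplication by $\lambda$ on $L$, via the projection formula. Any local section $f$ of $\pi_*\cO_Y$ acts by multiplication by $f$ on $L$. Since multiplication by functions commutes with multiplication by $\lambda$, we get $m(f)\circ\theta=\theta\circ m(f)$. Hence $m$ lands in the centralizer over all of $X$, and in particular over $X^{\circ}$. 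It is a morphism of $\cO$-algebras because multiplication is associative and unital.

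\textbf{Step 2: injectivity.} Injectivity of $m^{\circ}$ is clear. $L$ is a line bundle on the integral curve $Y$, so multiplication by a nonzero function on an open set of $Y$ is injective on $L$. Hence $m(f)=0$ forces $f=0$.

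\textbf{Step 3: surjectivity.} This is the main point. Both sides are locally free of rank $n$: the source because $\pi^{\circ}$ is finite flat of degree $n$. For the target, fix $x\in X^{\circ}$. The fibre $\pi^{-1}(x)$ consists of $n$ distinct reduced points $y_1,\dots,y_n$, so $\theta_x$ has $n$ distinct eigenvalues. In a neighbourhood $U$ of $x$ (analytic, or étale after base change, which is harmless since the statement is local and the formation of centralizers and pushforwards commutes with flat base change), $\pi^{-1}(U)$ splits as $\bigsqcup_i U_i$ with each $U_i\to U$ an isomorphism. Then $E|_U\cong\bigoplus_i L|_{U_i}$, with $\theta$ diagonal with entries $\lambda|_{U_i}$, and these are pairwise distinct at every point of $U$. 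An endomorphism commuting with a diagonal matrix whose entries are pointwise distinct must be diagonal. This can be checked entrywise: $(\lambda_i-\lambda_j)a_{ij}=0$ with $\lambda_i-\lambda_j$ nowhere vanishing. So the centralizer is $\bigoplus_i\cO_U$. This is exactly the image of $\pi_*\cO_{\pi^{-1}(U)}=\bigoplus_i\cO_{U_i}$, acting diagonally. Therefore $m^{\circ}$ is an isomorphism on $U$, and hence globally on $X^{\circ}$.

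\textbf{Expected obstacle.} The only delicate point is the splitting in Step 3. This requires either working in the analytic topology or descending an isomorphism of coherent sheaves along an étale cover. Both are routine, since being an isomorphism is local and fpqc-local. An alternative that avoids splitting is to check that $m^{\circ}$ is an isomorphism on fibres at each $x$. The map is an injective map of rank-$n$ vector bundles. The fibre $(\pi_*\cO_Y)_x=\prod_i\mathbb{C}$ maps onto the centralizer of a regular semisimple matrix, which is $n$-dimensional. To use this one must know the centralizer sheaf is locally free of rank $n$ with fibres equal to fibrewise centralizers. That again follows from the local diagonal description, so I would present the splitting argument.
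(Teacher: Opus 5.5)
Your proof is correct and follows the same route as the paper: split the \'etale cover into sheets over a small open set $U\subset X^{\circ}$, observe that $\theta|_U$ is diagonal with pointwise distinct eigenvalues, and conclude that the centralizer is the diagonal algebra $\bigoplus_i \cO_U$, which is the image of $\pi_*\cO_{\pi^{-1}(U)}$. Your Step~1 makes explicit what the paper leaves implicit: $\theta$ is the pushforward of the tautological section, so $m$ is a globally defined map into the centralizer and the gluing is automatic. One small correction to Step~2: on a disconnected open set such as $\pi^{-1}(U)$, a nonzero function need not act injectively on $L$. The correct reason for injectivity is that if $f$ annihilates a local generator of $L$, then $f=0$; in any case Step~3 already proves bijectivity.
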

\begin{proof}
On a simply connected open subset $U\subset X^{\circ}$, the cover splits as a disjoint union of sheets
\[
\pi^{-1}(U)\,=\,U_{1}\sqcup \cdots \sqcup U_{n},
\qquad
\pi_{i}\,:=\,\pi|_{U_{i}}\,\colon\, U_{i}\xrightarrow{\sim} U.
\]
Hence
\[
E|_{U}\,\cong\, \bigoplus_{i\,=\,1}^{n}(\pi_{i})_{*}(L|_{U_{i}}),
\]
and $\theta|_{U}$ is diagonal with pairwise distinct eigenvalues. The endomorphisms commuting with $\theta|_{U}$ are therefore exactly the diagonal endomorphisms, which identify with functions on the disjoint union $U_{1}\sqcup\cdots\sqcup U_{n}$. These local identifications agree on overlaps.
\end{proof}

\medskip

Let $L^{\circ}:=L|_{Y^{\circ}}$. Restricting the global spectral Atiyah class gives
\[
j^*\beta_{\mathrm{spec}}(L,\,\pi)\,\in\,
\Ext^1_{X^{\circ}}\bigl(E|_{X^{\circ}},\,\pi^{\circ}_{*}(\Omega^{1}_{Y^{\circ}}\otimes L^{\circ})\bigr).
\]
Since $\pi^{\circ}$ is finite \etale, we have $\Omega^{1}_{Y^{\circ}}\,\cong\, \pi^{\circ *}\Omega^{1}_{X^{\circ}}$, and since $L^{\circ}$ is a line bundle, $\End(L^{\circ})\,\cong\, \cO_{Y^{\circ}}$. Therefore, the Atiyah class
\[
\At(L^{\circ})\,\in\, H^{1}(Y^{\circ},\Omega^{1}_{Y^{\circ}})
\]
corresponds, via the canonical isomorphisms
\[
H^{1}(Y^{\circ},\Omega^{1}_{Y^{\circ}})
\,\cong\,
H^{1}(Y^{\circ},\pi^{\circ *}\Omega^{1}_{X^{\circ}})
\,\cong\,
H^{1}\bigl(X^{\circ},\Omega^{1}_{X^{\circ}}\otimes \pi^{\circ}_{*}\cO_{Y^{\circ}}\bigr),
\]
to a class
\[
\beta^{\circ}_{\mathrm{spec}}(L,\,\pi)\,\in\,
H^{1}\bigl(X^{\circ},\Omega^{1}_{X^{\circ}}\otimes \pi^{\circ}_{*}\cO_{Y^{\circ}}\bigr).
\]

\begin{definition}\label{def:beta-spec-etale}
We call $\beta^{\circ}_{\mathrm{spec}}(L,\,\pi)$ the scalar form of the restricted spectral Atiyah class on the \etale\ locus.
\end{definition}

\begin{theorem}\label{thm:etale-invariant}
Let $(E,\,\,\theta)$, $Y$, and $L$ be as above. Then the explicit map of coefficient sheaves
\begin{equation}\label{eq:centralizer-inclusion}
\operatorname{id}\otimes m^{\circ}\,\colon\, \Omega^{1}_{X^{\circ}}\otimes \pi^{\circ}_{*}\cO_{Y^{\circ}}\longrightarrow \Omega^{1}_{X^{\circ}}\otimes \End(E|_{X^{\circ}})
\end{equation}
sends the scalar form of the restricted spectral Atiyah class to the restriction of the Atiyah class:
\begin{equation}\label{eq:etale-main}
(\operatorname{id}\otimes m^{\circ})_{*}\beta^{\circ}_{\mathrm{spec}}(L,\,\pi)\,=\,j^{*}\At(E)
\qquad \text{in } H^{1}\bigl(X^{\circ},\Omega^{1}_{X^{\circ}}\otimes \End(E|_{X^{\circ}})\bigr).
\end{equation}
\end{theorem}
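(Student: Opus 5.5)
The plan is to restrict Corollary~\ref{cor:global-bridge} to $X^{\circ}$ and then translate both sides through the étale identifications. Over $X^{\circ}$ the sheaf $\Omega^1_{Y^\circ/X^\circ}$ vanishes. By Proposition~\ref{prop:jet-comparison}, this makes $\iota^\circ$ an isomorphism
\[
\iota^\circ\colon \Omega^1_{X^\circ}\otimes E|_{X^\circ}\xrightarrow{\sim}\pi^\circ_*(\Omega^1_{Y^\circ}\otimes L^\circ),
\]
and it makes $\Psi_{\pi,L}|_{X^\circ}$ an isomorphism of extensions. Restricting \eqref{eq:global-bridge} then gives
\[
j^*\At(E)=(\iota^\circ)^{-1}_*\,j^*\beta_{\mathrm{spec}}(L,\pi).
\]
So it suffices to show that the composite identification
\[
\Ext^1_{X^\circ}\bigl(E|_{X^\circ},\pi^\circ_*(\Omega^1_{Y^\circ}\otimes L^\circ)\bigr)\xrightarrow{(\iota^\circ)^{-1}_*}H^1\bigl(X^\circ,\Omega^1_{X^\circ}\otimes\End(E|_{X^\circ})\bigr)
\]
sends $\pi^\circ_*\At(L^\circ)$ to $(\operatorname{id}\otimes m^\circ)_*\beta^\circ_{\mathrm{spec}}(L,\pi)$. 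In other words, we must check that two routes from $\At(L^\circ)$ into $H^1(X^\circ,\Omega^1_{X^\circ}\otimes\End(E|_{X^\circ}))$ agree. The first route pushes forward the extension. The second takes the scalar form and then applies the multiplication action.

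I would do this comparison with Čech cocycles. First choose an open cover $\{U_\alpha\}$ of $X^\circ$ by simply connected opens, so that each $\pi^{-1}(U_\alpha)$ splits into sheets $U_{\alpha,1}\sqcup\cdots\sqcup U_{\alpha,n}$ as in the proof of Lemma~\ref{lem:centralizer}. Refining if necessary, I would also choose trivializations $s_{\alpha,i}$ of $L$ on each sheet. These give a frame of $E|_{U_\alpha}=\bigoplus_i(\pi_i)_*L|_{U_{\alpha,i}}$. On overlaps, the transition functions of $E$ are block "permutation-diagonal": a sheet of $U_\alpha$ is matched with a sheet of $U_\beta$ over $U_\alpha\cap U_\beta$, and each matched pair contributes a scalar transition $g_{\alpha\beta,i}$ of $L$.

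The Atiyah class of $L^\circ$ is represented by the cocycle $d\log g_{\alpha\beta,i}$, viewed as a section of $\Omega^1_{Y^\circ}=\pi^{\circ*}\Omega^1_{X^\circ}$ on the sheets. Its scalar form $\beta^\circ_{\mathrm{spec}}$ is the corresponding section of $\Omega^1_{X^\circ}\otimes\pi^\circ_*\cO_{Y^\circ}$. Applying $m^\circ$ produces the diagonal matrix of one-forms $\mathrm{diag}(d\log g_{\alpha\beta,i})$, expressed in the frame. On the other side, $\At(E)$ is represented by $g_{\alpha\beta}^{-1}dg_{\alpha\beta}$ for the matrix transition functions $g_{\alpha\beta}$. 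The permutation part is locally constant, so it contributes no derivative, and this cocycle is exactly the same diagonal matrix. Equivalently, the local holomorphic connections $d$ in the sheet frames of $L$ push forward to local connections on $E$. The differences between these connections are the given cocycles, and they are diagonal precisely because the frames are adapted to the sheets.

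The main obstacle is bookkeeping of identifications rather than mathematics. One point is checking that $(\iota^\circ)^{-1}$ composed with the projection-formula isomorphism $\pi^\circ_*(\pi^{\circ*}\Omega^1\otimes L^\circ)\cong\Omega^1\otimes\pi^\circ_*L^\circ$ is compatible with viewing $\End(L^\circ)=\cO_{Y^\circ}$ as acting on $E$ through $m^\circ$. The other is fixing sign conventions for the Atiyah class cocycle in a way that is consistent on $X$ and on $Y$. I would handle both by working only in the split local frames above, where every identification is the identity on each sheet.
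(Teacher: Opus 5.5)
Your proposal is correct and follows the paper's own argument. You restrict Corollary~\ref{cor:global-bridge} to $X^\circ$, where $\iota^\circ$ and $\Psi_{\pi,L}$ become isomorphisms because $\Omega^1_{Y^\circ/X^\circ}=0$, and then identify $\pi^\circ_*\At(L^\circ)$ with $(\operatorname{id}\otimes m^\circ)_*\beta^\circ_{\mathrm{spec}}(L,\pi)$. The one difference is that you verify this last identification with \v{C}ech cocycles in sheet-adapted frames, using that the permutation part of the transition matrices is locally constant, whereas the paper only asserts it; your cocycle computation in fact computes $j^*\At(E)$ directly, so it would prove the theorem even without the jet comparison.
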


\begin{proof}
Restrict Proposition \ref{prop:jet-comparison} to $X^{\circ}$. Since $\Omega^{1}_{Y^{\circ}/X^{\circ}}=0$, the exact sequence \eqref{eq:jet-comparison-exact} becomes an isomorphism
\[
\Psi_{\pi^{\circ},L^{\circ}}\colon J^{1}_{X^{\circ}}(E|_{X^{\circ}})\xrightarrow{\sim} \pi^{\circ}_{*}J^{1}_{Y^{\circ}}(L^{\circ}).
\]
Hence, the coefficient inclusion $\iota^{\circ}$ is an isomorphism. Restricting Corollary \ref{cor:global-bridge} to $X^{\circ}$, we obtain
\[
j^{*}\At(E)=j^{*}\beta_{\mathrm{spec}}(L,\,\pi)
\]
after identifying
\(
\Omega^{1}_{X^{\circ}}\otimes E|_{X^{\circ}}
\,\cong\,
\pi^{\circ}_{*}(\Omega^{1}_{Y^{\circ}}\otimes L^{\circ}).
\)

Now $L^{\circ}$ is a line bundle, so $\End(L^{\circ})\,\cong\, \cO_{Y^{\circ}}$ and \(\At(L^{\circ})\) is a scalar class. By Definition \ref{def:beta-spec-etale}, this scalar class corresponds to
\[
\beta^{\circ}_{\mathrm{spec}}(L,\,\pi)\,\in\,
H^{1}\bigl(X^{\circ},\Omega^{1}_{X^{\circ}}\otimes \pi^{\circ}_{*}\cO_{Y^{\circ}}\bigr).
\]
Under multiplication on \(E|_{X^{\circ}}\), the action of \(\pi^{\circ}_{*}\cO_{Y^{\circ}}\) on \(E|_{X^{\circ}}\) becomes the map \eqref{eq:centralizer-inclusion}. Therefore the image of \(\beta^{\circ}_{\mathrm{spec}}(L,\,\pi)\) under \((\operatorname{id}\otimes m^\circ)_*\) is exactly the class represented by the pushed-forward jet sequence on \(X^\circ\), namely \(j^*\At(E)\). This proves \eqref{eq:etale-main}.
\end{proof}

\section{A logarithmic refinement across the discriminant}\label{sec:logarithmic}

The results of Section~\ref{sec:spectral} show that over the \etale\ locus
\[
X^\circ:=X\setminus \Delta
\]
the Atiyah class of the underlying bundle is induced from the scalar Atiyah class of the
spectral line bundle via the spectral action; see Theorem~\ref{thm:etale-invariant}. In
this section, we explain how this spectral description extends across the discriminant
with logarithmic singularities when the discriminant divisor is reduced.

Throughout this section, \(\Delta\) denotes the scheme-theoretic discriminant divisor of
the finite map
\[
\pi:Y\longrightarrow X,
\]
equivalently, the branch divisor defined by the zeroth Fitting ideal of
\(\pi_*\Omega^1_{Y/X}\). In the spectral curve situation, this agrees with the divisor cut
out by the discriminant of the characteristic polynomial.

\begin{definition}\label{def:log-centralizer}
Assume that the discriminant divisor \(\Delta\subset X\) is reduced. We define the
\emph{regularized centralizer sheaf} of \(\theta\) to be the saturation
\[
\mathcal Z_\theta
:=
\operatorname{Sat}_{\End(E)}\bigl(m(\pi_*(\mathcal{O}_Y))\bigr)
=
\ker\!\left(
\End(E)\longrightarrow
\frac{\End(E)/m(\pi_*(\mathcal{O}_Y)}
{\text{\rm torsion}}
\right).
\]
\end{definition}

Since \(X\) is a smooth curve, every saturated subsheaf of a vector bundle is locally
free. Hence \(\mathcal Z_\theta\) is a vector bundle on \(X\). By
Lemma~\ref{lem:centralizer}, over the \etale\ locus one has
\[
\mathcal Z_\theta|_{X^\circ}
\cong
Z_{\End(E|_{X^\circ})}(\theta|_{X^\circ}).
\]
Thus \(\mathcal Z_\theta\) is a natural locally free extension across \(\Delta\) of the
centralizer sheaf of the Higgs field on \(X^\circ\).   We now construct the logarithmic coefficient morphism.

\begin{lemma}\label{lem:log-coefficient-map}
Assume that \(\Delta\) is reduced. There is a canonical \(\pi_*(\mathcal{O}_Y)\)-linear morphism
\[
\lambda_\pi:
\pi_*\Omega_Y^1
\longrightarrow
\Omega_X^1(\log\Delta)\otimes \pi_*(\mathcal{O}_Y)
\]
whose restriction to \(X^\circ\) is the usual \etale\ identification
\(
\pi^\circ_*\Omega_{Y^\circ}^1
\cong
\Omega_{X^\circ}^1\otimes \pi^\circ_*\cO_{Y^\circ}.
\)
Moreover, if
\(
d\pi_{\pi_*(\mathcal{O}_Y)}:
\Omega_X^1\otimes \pi_*(\mathcal{O}_Y)
\longrightarrow
\pi_*\Omega_Y^1
\)
is the morphism induced by \(d\pi:\pi^*\Omega_X^1\to\Omega_Y^1\), and if
\[
k:\Omega_X^1\hookrightarrow \Omega_X^1(\log\Delta)
\]
is the natural inclusion, then
\begin{equation}\label{eq:lambda-dpi-compatibility}
\lambda_\pi\circ d\pi_{\pi_*(\mathcal{O}_Y)}
=
k\otimes \operatorname{id}_{\pi_*(\mathcal{O}_Y)}.
\end{equation}
Consequently, after composing with the spectral action, there is a canonical morphism
\[
\mu_\pi:
\pi_*\Omega_Y^1
\longrightarrow
\Omega_X^1(\log\Delta)\otimes \mathcal Z_\theta .
\]
\end{lemma}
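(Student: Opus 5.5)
The plan is to build \(\lambda_\pi\) upstairs on \(Y\) as an inclusion of sheaves of meromorphic differentials, and then push it forward using the projection formula. Concretely, I will show that inside the sheaf of meromorphic \(1\)-forms on \(Y\) there is a canonical \(\cO_Y\)-linear inclusion
\[
\Omega_Y^1\subset \pi^*\bigl(\Omega_X^1(\log\Delta)\bigr)=\pi^*\Omega_X^1\otimes\cO_Y(\pi^*\Delta).
\]
Both sides are subsheaves of the meromorphic forms, since \(d\pi\) is generically an isomorphism. Pushing forward and applying the projection formula \(\pi_*\pi^*\Omega_X^1(\log\Delta)\cong \Omega_X^1(\log\Delta)\otimes\pi_*\cO_Y\) will give \(\lambda_\pi\). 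It is \(\pi_*\cO_Y\)-linear because it comes from an \(\cO_Y\)-linear map on \(Y\).

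The key local step uses the reducedness of \(\Delta\). Since \(\Delta\) is cut out by the Fitting ideal of \(\pi_*\Omega^1_{Y/X}\), its multiplicity at \(x\in X\) equals
\[
\sum_{y\mapsto x}(e_y-1).
\]
So \(\Delta\) reduced means that over each \(x\in\Delta\) exactly one point \(y\) has \(e_y=2\) and all other preimages are unramified. I then check the inclusion case by case.
\begin{itemize}
\item At an unramified point over \(\Delta\), or any point over \(X^\circ\), \(d\pi\) identifies \(\Omega^1_Y\) with \(\pi^*\Omega^1_X\). This is contained in \(\pi^*\Omega^1_X(\log\Delta)\).
\item At the ramification point, choose coordinates with \(x=t^2\). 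Then \(dx/x=2\,dt/t\), so
\[
dt=\tfrac{t}{2}\cdot\tfrac{dx}{x}.
\]
This is a regular section of \(\pi^*\Omega^1_X(\log\Delta)\), in fact vanishing at \(y\).
\end{itemize}
This case analysis, especially the translation of "reduced Fitting divisor" into "simple ramification with one ramified point per fibre", is the step I expect to need the most care. Everything else is formal.

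The remaining properties are then essentially tautological. Over \(X^\circ\) no poles are allowed and \(\Omega^1_{Y^\circ}=\pi^{\circ*}\Omega^1_{X^\circ}\), so \(\lambda_\pi\) restricts to the \etale\ identification. For \eqref{eq:lambda-dpi-compatibility}, the composite \(\pi^*\Omega^1_X\xrightarrow{d\pi}\Omega^1_Y\subset\pi^*\Omega^1_X(\log\Delta)\) is, as an inclusion of meromorphic forms, just \(\pi^*k\). Pushing forward and using naturality of the projection formula gives
\[
\lambda_\pi\circ d\pi_{\pi_*\cO_Y}=k\otimes\operatorname{id}.
\]
Finally, \(m\) maps \(\pi_*\cO_Y\) into its saturation \(\mathcal Z_\theta\subset\End(E)\) by Definition \ref{def:log-centralizer}, so \(m\) factors through \(\mathcal Z_\theta\). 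Setting
\[
\mu_\pi:=(\operatorname{id}\otimes m)\circ\lambda_\pi\colon \pi_*\Omega^1_Y\longrightarrow \Omega^1_X(\log\Delta)\otimes\mathcal Z_\theta
\]
gives the canonical morphism.
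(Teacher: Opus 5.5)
Your argument is correct, and it reaches the same map by a more intrinsic route than the paper.

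\emph{The paper's route.} The paper works downstairs on a disc around $x\in\Delta$. It writes sections of $\pi_*\Omega^1_Y$ over the ramified component as $(a_0+ta_1)\,dt$ and defines $\lambda_\pi$ by the explicit formula $\eta\mapsto\tfrac12 a_1\,dz\otimes 1+\tfrac12 a_0\,\tfrac{dz}{z}\otimes t$ in the basis $\{1,t\}$ of the local spectral algebra. It then glues these local definitions using torsion-freeness of the target, and checks $\pi_*\cO_Y$-linearity and $\lambda_\pi\circ d\pi_{\pi_*\cO_Y}=k\otimes\mathrm{id}$ by direct computation.

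\emph{Your route.} You realize $\lambda_\pi$ on $Y$ as the inclusion of invertible sheaves $\Omega^1_Y\subset\pi^*\Omega^1_X(\log\Delta)$ inside meromorphic forms, and then push forward with the projection formula. The local input is the same: your $dt=\tfrac{t}{2}\,\tfrac{dz}{z}$ is the paper's $dt=dz/2t$ rewritten using $1/t=t/z$. The difference is that several things come for free in your version:
\begin{itemize}
\item $\cO_Y$-linearity and independence of coordinates;
\item the restriction to $X^\circ$;
\item the $d\pi$-compatibility, which becomes the tautology that the inclusion precomposed with $d\pi$ is $\pi^*k$.
\end{itemize}
No gluing step is needed, and the paper's coordinate formula is recovered by writing your inclusion in the basis $\{1,t\}$.

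\emph{The step you flagged as delicate.} Your viewpoint actually makes the Fitting-ideal translation dispensable for the construction itself. For any ramification index $e$, the local form $z=t^e$ gives $\pi^*(dz/z)=e\,dt/t$. Hence $\Omega^1_Y\subset\pi^*\Omega^1_X(\log\Delta_{\mathrm{red}})$ holds with no hypothesis on $\Delta$. Reducedness only serves to match the paper's setup, where $\Omega^1_X(\log\Delta)=\Omega^1_X(\Delta)$.
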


\begin{proof}
Over \(X^\circ\), the morphism is the canonical identification
\[
\Omega_{Y^\circ}^1
\cong
\pi^{\circ *}\Omega_{X^\circ}^1.
\]
We need to check that this identification extends across \(\Delta\) with at most
logarithmic poles.

Let \(x\in\Delta\). Since \(\Delta\) is reduced, the branch multiplicity at \(x\) is one.
Equivalently, over a sufficiently small analytic disc \(D\subset X\) centered at \(x\),
there is exactly one simply ramified component and all other components are unramified.
On the simply ramified component we may choose local coordinates \(z\) on \(D\) and \(t\)
on the corresponding component of \(\pi^{-1}(D)\) such that
\[
z=t^2.
\]
Thus, the local algebra of this component is
\[
\cO_D\oplus t\cO_D,
\qquad t^2=z.
\]

On this ramified component, every local section of \(\Omega_Y^1\) can be written uniquely
in the form
\[
\eta=(a_0(z)+t\,a_1(z))\,dt,
\qquad a_0,a_1\in \cO_D.
\]
Since
\[
dt=\frac{dz}{2t},
\]
we obtain
\[
\eta
=
\frac{1}{2}a_1(z)\,dz
+
\frac{1}{2}a_0(z)\frac{dz}{t}.
\]
Using \(1/t=t/z\), this becomes
\[
\eta
\longmapsto
\frac{1}{2}a_1(z)\,dz\otimes 1
+
\frac{1}{2}a_0(z)\frac{dz}{z}\otimes t
\]
as a section of
\[
\Omega_X^1(\log\Delta)\otimes \pi_*(\mathcal{O}_Y).
\]
Thus, only a logarithmic pole along \(z=0\) occurs.

On every unramified component of \(\pi^{-1}(D)\), the map is the ordinary pullback
identification and has no pole. These local descriptions agree with the canonical
\etale\ identification on \(D\setminus\{x\}\). Hence, they glue uniquely, because the
target sheaf
\[
\Omega_X^1(\log\Delta)\otimes \pi_*(\mathcal{O}_Y)
\]
is torsion-free. This gives the global morphism
\[
\lambda_\pi:
\pi_*\Omega_Y^1
\longrightarrow
\Omega_X^1(\log\Delta)\otimes \pi_*(\mathcal{O}_Y).
\]

We also record that \(\lambda_\pi\) is \(\pi_*(\mathcal{O}_Y)\)-linear. This is immediate on the unramified
components. On the ramified component, let
\[
f=b_0(z)+t\,b_1(z)\in {\pi_*(\mathcal{O}_Y)}(D),
\qquad
\eta=(a_0(z)+t\,a_1(z))\,dt.
\]
Using the explicit formula above, one checks directly that
\[
\lambda_\pi(f\eta)=f\cdot \lambda_\pi(\eta),
\]
where \(\pi_*(\mathcal{O}_Y)\) acts on \(\Omega_X^1(\log\Delta)\otimes \pi_*(\mathcal{O}_Y)\) through the second factor.

It remains to prove the compatibility
\eqref{eq:lambda-dpi-compatibility}. Again, it suffices to check this locally. On the
ramified component, take a local section
\[
b_0(z)+t\,b_1(z)\in {\pi_*(\mathcal{O}_Y)}(D).
\]
Then
\[
d\pi_{\pi_*(\mathcal{O}_Y)}\bigl(dz\otimes (b_0(z)+t\,b_1(z))\bigr)
=
(b_0(z)+t\,b_1(z))\,d(t^2)
=
(2t\,b_0(z)+2z\,b_1(z))\,dt.
\]
Applying the formula for \(\lambda_\pi\), with
\[
a_0(z)=2z\,b_1(z),
\qquad
a_1(z)=2b_0(z),
\]
we get
\[
\lambda_\pi\bigl((2t\,b_0(z)+2z\,b_1(z))\,dt\bigr)
=
b_0(z)\,dz\otimes 1
+
b_1(z)\,dz\otimes t.
\]
Hence
\[
\lambda_\pi\circ d\pi_{\pi_*(\mathcal{O}_Y)}
\bigl(dz\otimes (b_0(z)+t\,b_1(z))\bigr)
=
dz\otimes (b_0(z)+t\,b_1(z)),
\]
which is precisely \((k\otimes\operatorname{id}_{\pi_*(\mathcal{O}_Y)})\) on this local section. The
verification on unramified components is immediate. Therefore
\[
\lambda_\pi\circ d\pi_{\pi_*(\mathcal{O}_Y)}
=
k\otimes \operatorname{id}_{\pi_*(\mathcal{O}_Y)}.
\]

Finally, composing \(\lambda_\pi\) with the spectral action gives
\[
\pi_*\Omega_Y^1
\xrightarrow{\lambda_\pi}
\Omega_X^1(\log\Delta)\otimes \pi_*(\mathcal{O}_Y)
\xrightarrow{\operatorname{id}\otimes m}
\Omega_X^1(\log\Delta)\otimes \End(E).
\]
The image lies in
\[
\Omega_X^1(\log\Delta)\otimes m(A)
\subset
\Omega_X^1(\log\Delta)\otimes \mathcal Z_\theta.
\]
Thus, this composition factors canonically through
\[
\mu_\pi:
\pi_*\Omega_Y^1
\longrightarrow
\Omega_X^1(\log\Delta)\otimes \mathcal Z_\theta .
\]
\end{proof}

Since \(\lambda_\pi\) is \(\pi_*(\mathcal{O}_Y)\)-linear, it can be tensored over \(\pi_*(\mathcal{O}_Y)\) with the
\(\pi_*(\mathcal{O}_Y)\)-module \(E=\pi_*L\). Using the canonical identification
\[
\pi_*(\Omega_Y^1\otimes L)
\cong
\pi_*\Omega_Y^1\otimes_A E,
\]
we obtain a morphism
\begin{equation}\label{eq:lambda-pi-L}
\lambda_{\pi,\,L}:
\pi_*(\Omega_Y^1\otimes L)
\longrightarrow
\Omega_X^1(\log\Delta)\otimes E.
\end{equation}
Explicitly, if
\[
\lambda_\pi(\eta)=\sum_r \alpha_r\otimes f_r,
\]
then
\[
\lambda_{\pi,\,L}(\eta\otimes s)
=
\sum_r \alpha_r\otimes f_r s.
\]

We now define the logarithmic spectral Atiyah class. Since \(L\) is a line bundle, its
Atiyah class is a scalar class
\[
\At(L)\in H^1(Y,\Omega_Y^1).
\]
Because \(\pi\) is finite, this may be viewed as a class
\[
\pi_*\At(L)\in H^1(X,\,\pi_*\Omega_Y^1).
\]

\begin{definition}\label{def:log-atiyah-class}
The \emph{logarithmic spectral Atiyah class} of \((E,\theta)\) is
\[
\At^{\log}_{\theta}(E)
:=
(\mu_\pi)_*\bigl(\pi_*\At(L)\bigr)
\in
H^1\bigl(X,\,\Omega_X^1(\log\Delta)\otimes \mathcal Z_\theta\bigr).
\]
\end{definition}

This class is canonical from the spectral data. It is obtained from the scalar Atiyah
class of the spectral line bundle by allowing logarithmic poles along the discriminant
and then applying the spectral action to the regularized centralizer.

\begin{lemma}\label{lem:scalar-extension-compatibility}
Under the natural identification
\[
\pi_*(\Omega_Y^1\otimes L)
\cong
\pi_*\Omega_Y^1\otimes_A E,
\]
the pushed-forward jet extension defining \(\beta_{\mathrm{spec}}(L,\,\pi)\) is obtained
from the scalar class
\[
\pi_*\At(L)\in H^1(X,\,\pi_*\Omega_Y^1)
\]
by the natural \(\pi_*(\mathcal{O}_Y)\)-module action on \(E\). Consequently,
\[
(\lambda_{\pi,\,L})_*\beta_{\mathrm{spec}}(L,\,\pi)
=
(\operatorname{id}\otimes q)_*(\mu_\pi)_*(\pi_*\At(L))
\]
in
\(
H^1\bigl(X,\,\Omega_X^1(\log\Delta)\otimes \End(E)\bigr),
\)
where
\(
q:\mathcal Z_\theta\hookrightarrow \End(E)
\)
is the inclusion.
\end{lemma}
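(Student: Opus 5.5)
The plan is to make both sides explicit at the cochain level, using a Čech (or Dolbeault) description of \(\At(L)\). Choose an affine open cover \(\{V_\alpha\}\) of \(X\), fine enough that \(L\) is trivial on each \(\pi^{-1}(V_\alpha)\) (possible since \(\pi\) is finite, so \(\pi^{-1}(V_\alpha)\) is affine). Pick frames \(e_\alpha\) of \(L\) there, with transition functions \(e_\beta=g_{\alpha\beta}e_\alpha\), \(g_{\alpha\beta}\in\cO_Y^\times(\pi^{-1}V_{\alpha\beta})\). Then \(\At(L)\) is represented by the cocycle \(\{d\log g_{\alpha\beta}\}\) with values in \(\Omega^1_Y\). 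Since \(\pi^{-1}\) of an affine cover is an affine cover, the same cocycle, read as sections of \(\pi_*\Omega^1_Y\) over \(V_{\alpha\beta}\), represents \(\pi_*\At(L)\in H^1(X,\pi_*\Omega^1_Y)\).

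First I will show that the pushed-forward jet sequence \eqref{eq:spectral-atiyah-class} has the corresponding extension cocycle. The local splittings \(\sigma_\alpha(fe_\alpha)=j^1(f)\cdot e_\alpha\)-type sections (i.e.\ \(s\mapsto j^1_Y(s)\) computed in the frame \(e_\alpha\), equivalently the flat connection \(\nabla_\alpha\) with \(\nabla_\alpha e_\alpha=0\)) of \(J^1_Y(L)\to L\) on \(\pi^{-1}V_\alpha\) push forward to splittings of \(\pi_*J^1_Y(L)\to E\) over \(V_\alpha\). Their differences are \(\nabla_\alpha-\nabla_\beta\colon s\mapsto d\log g_{\alpha\beta}\otimes s\). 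So the class \(\beta_{\mathrm{spec}}(L,\pi)\in\Ext^1_X(E,\pi_*\Omega^1_Y\otimes_A E)\) is represented by the \(\Hom(E,\pi_*\Omega^1_Y\otimes_A E)\)-valued cocycle \(s\mapsto d\log g_{\alpha\beta}\otimes s\). Here I use the identification \(\pi_*(\Omega^1_Y\otimes L)\cong \pi_*\Omega^1_Y\otimes_A E\) with \(A=\pi_*\cO_Y\), which is the projection formula for the \(A\)-module \(E\). This gives the first assertion: \(\beta_{\mathrm{spec}}\) is the image of \(\pi_*\At(L)\) under the map \(H^1(X,\pi_*\Omega^1_Y)\to\Ext^1_X(E,\pi_*\Omega^1_Y\otimes_A E)\) induced by \(\eta\mapsto(s\mapsto \eta\otimes s)\).

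Next I apply \(\lambda_{\pi,L}\) to this cocycle. By \(A\)-linearity of \(\lambda_\pi\) (Lemma \ref{lem:log-coefficient-map}) and the explicit formula for \(\lambda_{\pi,L}\), writing \(\lambda_\pi(d\log g_{\alpha\beta})=\sum_r\alpha_r\otimes f_r\) gives
\[
\lambda_{\pi,L}(d\log g_{\alpha\beta}\otimes s)=\sum_r\alpha_r\otimes f_r s=(\operatorname{id}\otimes m)\bigl(\lambda_\pi(d\log g_{\alpha\beta})\bigr)(s).
\]
So \((\lambda_{\pi,L})_*\beta_{\mathrm{spec}}\) is represented by \((\operatorname{id}\otimes m)\circ\lambda_\pi\) applied to the cocycle of \(\pi_*\At(L)\). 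By construction of \(\mu_\pi\), this equals \((\operatorname{id}\otimes q)\circ\mu_\pi\), because \(m\) factors through \(\mathcal Z_\theta\xrightarrow{q}\End(E)\). Cohomology classes then agree under the identification \(\Ext^1_X(E,\Omega^1_X(\log\Delta)\otimes E)\cong H^1(X,\Omega^1_X(\log\Delta)\otimes\End(E))\).

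The main obstacle is the first step: the Ext class of an extension of \(E\), which is not a sheaf-of-coefficients class, has to be matched with a class in \(H^1\) of a sheaf that is only an \(A\)-module. I would handle this by working throughout with \(\Hom_{\cO_X}(E,-)\)-valued Čech cocycles, which is legitimate because \(E\) is locally free on \(X\). The one point to check carefully is that the pushed-forward local splittings are \(\cO_X\)-linear splittings of the pushed-forward sequence. They are, since they come from \(\cO_Y\)-linear splittings on \(\pi^{-1}V_\alpha\), which are in particular \(\cO_X\)-linear.
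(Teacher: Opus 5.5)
Your proposal is correct and follows essentially the same route as the paper. Both arguments represent $\pi_*\At(L)$ by the \v{C}ech cocycle $d\log g_{\alpha\beta}$ and observe that the pushed-forward jet extension is given by this cocycle acting on the $A$-module $E$. Both then use the $A$-linearity of $\lambda_\pi$ to identify the image under $\lambda_{\pi,L}$ with $(\operatorname{id}\otimes m)\lambda_\pi=(\operatorname{id}\otimes q)\mu_\pi$; your version adds useful detail on the local splittings.

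One small repair concerns your parenthetical justification. Affineness of $\pi^{-1}(V_\alpha)$ does not by itself make $L$ trivial there, since affine curves can have nontrivial Picard group. Instead, shrink around each $x\in X$: $L$ is trivial on some open $W\supset\pi^{-1}(x)$, and $V=X\setminus\pi(Y\setminus W)$ is open because $\pi$ is closed, with $\pi^{-1}(V)\subset W$.
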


\begin{proof}
Choose a sufficiently fine analytic open cover \(\{U_i\}\) of \(X\) such that \(L\) is
trivial on \(\pi^{-1}(U_i)\). Let
\[
g_{ij}\in A^\times(U_i\cap U_j)
\]
be the corresponding transition functions. Then the scalar class \(\pi_*\At(L)\) is
represented by the \v{C}ech cocycle
\[
d_Yg_{ij}\,g_{ij}^{-1}
\in
\pi_*\Omega_Y^1(U_i\cap U_j).
\]
The pushed-forward jet extension of \(L\) is represented by the same cocycle acting by
multiplication on the \(\pi_*(\mathcal{O}_Y)\)-module \(E\). Equivalently, under
\[
\pi_*(\Omega_Y^1\otimes L)
\cong
\pi_*\Omega_Y^1\otimes_A E,
\]
its transition cocycle is
\[
s\longmapsto
(d_Yg_{ij}\,g_{ij}^{-1})\otimes s.
\]

Applying \(\lambda_{\pi,\,L}\), this cocycle becomes the cocycle obtained by applying
\(\lambda_\pi\) to \(d_Yg_{ij}\,g_{ij}^{-1}\) and then letting the resulting element of
\(\pi_*(\mathcal{O}_Y)\) act on \(E\). After composing with
\[
q:\mathcal Z_\theta\hookrightarrow \End(E),
\]
this is exactly the cocycle obtained from
\[
\mu_\pi(d_Yg_{ij}\,g_{ij}^{-1})
=
(\operatorname{id}\otimes m)\lambda_\pi(d_Yg_{ij}\,g_{ij}^{-1}).
\]
Thus the two resulting cohomology classes agree in
\(
H^1\bigl(X,\,\Omega_X^1(\log\Delta)\otimes \End(E)\bigr).
\)
\end{proof}

\begin{proposition}\label{prop:global-log-compatibility}
Let
\(
k:\Omega_X^1\hookrightarrow \Omega_X^1(\log\Delta)
\)
be the natural inclusion, and let
\(
q:\mathcal Z_\theta\hookrightarrow \End(E)
\)
be the inclusion of the regularized centralizer sheaf. Then
\begin{equation}\label{eq:global-log-compatibility}
(k\otimes \operatorname{id}_{\End(E)})_*\At(E)
=
(\operatorname{id}\otimes q)_*\At^{\log}_{\theta}(E)
\end{equation}
in
\(
H^1\bigl(X,\,\Omega_X^1(\log\Delta)\otimes \End(E)\bigr).
\)
\end{proposition}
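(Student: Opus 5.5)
The plan is to chain Corollary~\ref{cor:global-bridge}, the compatibility \eqref{eq:lambda-dpi-compatibility} of Lemma~\ref{lem:log-coefficient-map}, and Lemma~\ref{lem:scalar-extension-compatibility}. All classes involved are Yoneda extension classes in \(\Ext^1_X(E,-)\) with a fixed first argument \(E\). We identify \(\Ext^1_X(E,\Omega_X^1(\log\Delta)\otimes E)\cong H^1(X,\Omega_X^1(\log\Delta)\otimes\End(E))\), which holds because \(E\) is locally free. Under this identification, a coefficient morphism \(u\) acts by \(u_*=\Ext^1_X(E,u)\), and functoriality gives \((v\circ u)_*=v_*\circ u_*\). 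The whole argument is therefore a factorization of the coefficient map \(k\otimes\operatorname{id}_E\colon \Omega_X^1\otimes E\to\Omega_X^1(\log\Delta)\otimes E\) through \(\pi_*(\Omega_Y^1\otimes L)\).

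\textbf{Step 1.} We show that
\[
\lambda_{\pi,\,L}\circ\iota \;=\; k\otimes\operatorname{id}_E
\]
as maps \(\Omega_X^1\otimes E\to\Omega_X^1(\log\Delta)\otimes E\). This follows by tensoring \eqref{eq:lambda-dpi-compatibility} over \(A=\pi_*\cO_Y\) with the \(A\)-module \(E\). Here one uses that \(\iota\) is \(d\pi_{\pi_*(\cO_Y)}\otimes_A E\) under the projection-formula identifications
\[
\Omega_X^1\otimes E\cong(\Omega_X^1\otimes A)\otimes_A E,\qquad \pi_*(\Omega_Y^1\otimes L)\cong\pi_*\Omega_Y^1\otimes_A E.
\]
That identification is immediate from the construction of \(\iota\) in Proposition~\ref{prop:jet-comparison}. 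Since both sides are maps between torsion-free sheaves, the equality can also be checked on \(X^\circ\), or locally with the explicit formulas.

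\textbf{Step 2.} Apply Corollary~\ref{cor:global-bridge}, which gives
\[
(k\otimes\operatorname{id})_*\At(E)=(\lambda_{\pi,\,L})_*\iota_*\At(E)=(\lambda_{\pi,\,L})_*\beta_{\mathrm{spec}}(L,\pi).
\]
Under the identification \(\Ext^1(E,\Omega_X^1(\log\Delta)\otimes E)\cong H^1(\Omega_X^1(\log\Delta)\otimes\End(E))\), the map \((k\otimes\operatorname{id}_E)_*\) is \((k\otimes\operatorname{id}_{\End(E)})_*\), so the left side is the left side of \eqref{eq:global-log-compatibility}.

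\textbf{Step 3.} Lemma~\ref{lem:scalar-extension-compatibility} rewrites the right side as \((\operatorname{id}\otimes q)_*(\mu_\pi)_*(\pi_*\At(L))\). By Definition~\ref{def:log-atiyah-class}, this equals \((\operatorname{id}\otimes q)_*\At^{\log}_\theta(E)\), which concludes the proof.

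The main obstacle is Step 1, specifically the compatibility of the various identifications: the projection formula, the tensor identification over \(A\), and \(\Ext^1\) versus \(H^1\) with \(\End\) coefficients. One must confirm that the map \(\iota\) built via adjunction in Proposition~\ref{prop:jet-comparison} really coincides with \(d\pi_{\pi_*(\cO_Y)}\otimes_A E\). I would verify this on the local model of Lemma~\ref{lem:log-coefficient-map}, with \(z=t^2\) and \(L\) trivialized, where both maps send \(dz\otimes s\) to \(s\,d(t^2)\). Torsion-freeness of the targets then upgrades agreement on \(X^\circ\) to a global equality.
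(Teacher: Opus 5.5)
Your proposal is correct and follows essentially the same route as the paper's proof. It derives \(\lambda_{\pi,\,L}\circ\iota=k\otimes\operatorname{id}_E\) by tensoring \eqref{eq:lambda-dpi-compatibility} over \(A\) with \(E\), pushes \(\iota_*\At(E)=\beta_{\mathrm{spec}}(L,\,\pi)\) from Corollary~\ref{cor:global-bridge} through \(\lambda_{\pi,\,L}\), and concludes with Lemma~\ref{lem:scalar-extension-compatibility} and Definition~\ref{def:log-atiyah-class}; your torsion-freeness argument for checking Step~1 on \(X^\circ\) is valid and only makes explicit what the paper leaves implicit.
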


\begin{proof}
Recall the coefficient morphism from Section~\ref{sec:jets},
\[
\iota:
\Omega_X^1\otimes E
\longrightarrow
\pi_*(\Omega_Y^1\otimes L),
\]
which is induced by
\(
d\pi:\pi^*\Omega_X^1\longrightarrow \Omega_Y^1.
\)
By the compatibility \eqref{eq:lambda-dpi-compatibility}, after tensoring with \(L\) and
pushing forward, we obtain
\begin{equation}\label{eq:lambdatensor-iota}
\lambda_{\pi,\,L}\circ \iota
=
k\otimes \operatorname{id}_E:
\Omega_X^1\otimes E
\longrightarrow
\Omega_X^1(\log\Delta)\otimes E.
\end{equation}

By Corollary~\ref{cor:global-bridge}, we have
\(
\iota_*\At(E)=\beta_{\mathrm{spec}}(L,\,\pi)
\)
in
\(
\Ext_X^1\bigl(E,\pi_*(\Omega_Y^1\otimes L)\bigr).
\)
Applying the morphism induced by \(\lambda_{\pi,\,L}\), we get
\[
(\lambda_{\pi,\,L}\circ \iota)_*\At(E)
=
(\lambda_{\pi,\,L})_*\beta_{\mathrm{spec}}(L,\,\pi)
\]
in
\(
\Ext_X^1\bigl(E,\,\Omega_X^1(\log\Delta)\otimes E\bigr).
\)

Using \eqref{eq:lambdatensor-iota}, this becomes
\begin{equation}\label{eq:after-lambda}
(k\otimes \operatorname{id}_E)_*\At(E)
=
(\lambda_{\pi,\,L})_*\beta_{\mathrm{spec}}(L,\,\pi).
\end{equation}

Since \(E\) is locally free, we identify
\[
\Ext_X^1\bigl(E,\,\Omega_X^1(\log\Delta)\otimes E\bigr)
\cong
H^1\bigl(X,\,\Omega_X^1(\log\Delta)\otimes \End(E)\bigr).
\]

Under this identification, the left-hand side of \eqref{eq:after-lambda} is precisely
\[
(k\otimes \operatorname{id}_{\End(E)})_*\At(E).
\]
By Lemma~\ref{lem:scalar-extension-compatibility}, the right-hand side of
\eqref{eq:after-lambda} is
\[
(\operatorname{id}\otimes q)_*(\mu_\pi)_*(\pi_*\At(L)).
\]
By Definition~\ref{def:log-atiyah-class}, this is
\[
(\operatorname{id}\otimes q)_*\At^{\log}_{\theta}(E).
\]
Therefore
\[
(k\otimes \operatorname{id}_{\End(E)})_*\At(E)
=
(\operatorname{id}\otimes q)_*\At^{\log}_{\theta}(E)
\]
in
\(
H^1\bigl(X,\,\Omega_X^1(\log\Delta)\otimes \End(E)\bigr),
\)
as claimed.
\end{proof}

\begin{theorem}\label{thm:logarithmic-centralizer}
Let \((E,\theta)\) be a regular semisimple Higgs bundle on a smooth complex projective
curve \(X\), with smooth spectral curve
\(
\pi:Y\longrightarrow X
\)
and spectral line bundle \(L\). Assume that the
discriminant divisor \(\Delta\subset X\) is reduced. Then the logarithmic spectral Atiyah
class
\[
\At^{\log}_{\theta}(E)
\in
H^1\bigl(X,\,\Omega_X^1(\log\Delta)\otimes \mathcal Z_\theta\bigr)
\]
is a logarithmic refinement of the ordinary Atiyah class of \(E\) in the following global
sense:
\[
(k\otimes \operatorname{id}_{\End(E)})_*\At(E)
=
(\operatorname{id}\otimes q)_*\At^{\log}_{\theta}(E)
\]
in
\(
H^1\bigl(X,\,\Omega_X^1(\log\Delta)\otimes \End(E)\bigr),
\)
where
\[
k:\Omega_X^1\hookrightarrow \Omega_X^1(\log\Delta),
\qquad
q:\mathcal Z_\theta\hookrightarrow \End(E)
\]
are the natural inclusions. Consequently, after restriction to \(X^\circ=X\setminus\Delta\), one has
\[
(\operatorname{id}\otimes q^\circ)_*j^*\At^{\log}_{\theta}(E)
=
(\operatorname{id}\otimes m^\circ)_*
\beta^\circ_{\mathrm{spec}}(L,\,\pi)
=
j^*\At(E)
\]
in
\(
H^1\bigl(X^\circ,\Omega_{X^\circ}^1\otimes \End(E|_{X^\circ})\bigr),
\)
where
\[
q^\circ:\mathcal Z_\theta|_{X^\circ}\hookrightarrow \End(E|_{X^\circ})
\]
is the restricted inclusion.
\end{theorem}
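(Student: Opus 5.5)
The global identity in the statement is exactly Proposition~\ref{prop:global-log-compatibility}, so the first part of the theorem is obtained by quoting it. What remains is the chain of equalities over \(X^\circ\). I would prove this by restricting the global identity along \(j\) and comparing each side with the \etale\ description of Section~\ref{sec:spectral}.

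\emph{Step 1: restrict the global identity.} Restriction \(j^*\colon H^1(X,-)\to H^1(X^\circ,-|_{X^\circ})\) is natural in the coefficient sheaf. It therefore commutes with the pushforwards along \(k\otimes\operatorname{id}\) and \(\operatorname{id}\otimes q\). Over \(X^\circ\) we have \(\Omega^1_X(\log\Delta)|_{X^\circ}=\Omega^1_{X^\circ}\), and \(k\) restricts to the identity. Restricting the global identity then gives
\[
j^*\At(E)=(\operatorname{id}\otimes q^\circ)_*\,j^*\At^{\log}_\theta(E).
\]
This is the outer equality of the chain.

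\emph{Step 2: identify the middle term.} By Theorem~\ref{thm:etale-invariant}, \((\operatorname{id}\otimes m^\circ)_*\beta^\circ_{\mathrm{spec}}(L,\pi)=j^*\At(E)\), so the middle term equals the right-hand side. For a more intrinsic argument, which also explains the middle equality, I would unwind the definition of \(\At^{\log}_\theta(E)=(\mu_\pi)_*(\pi_*\At(L))\) on \(X^\circ\) as follows.
\begin{itemize}
\item By Lemma~\ref{lem:log-coefficient-map}, \(\lambda_\pi|_{X^\circ}\) is the \etale\ identification \(\pi^\circ_*\Omega^1_{Y^\circ}\cong\Omega^1_{X^\circ}\otimes\pi^\circ_*\cO_{Y^\circ}\).
\item Since \(\pi\) is finite, \(j^*\pi_*\At(L)=\pi^\circ_*\At(L^\circ)\).
\item Hence \(j^*(\lambda_\pi)_*\pi_*\At(L)\) is exactly \(\beta^\circ_{\mathrm{spec}}(L,\pi)\), by Definition~\ref{def:beta-spec-etale}.
\item By construction \(\mu_\pi=(\operatorname{id}\otimes m)\circ\lambda_\pi\), factored through \(\mathcal Z_\theta\). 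Therefore \(q\circ\mu_\pi=(\operatorname{id}\otimes m)\circ\lambda_\pi\), and restricting gives \((\operatorname{id}\otimes q^\circ)_*j^*\At^{\log}_\theta(E)=(\operatorname{id}\otimes m^\circ)_*\beta^\circ_{\mathrm{spec}}(L,\pi)\).
\end{itemize}

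\emph{Main obstacle.} The only point needing care is bookkeeping. I must check that the isomorphisms used to define \(\beta^\circ_{\mathrm{spec}}\) agree with \(\lambda_\pi|_{X^\circ}\): that is, \(H^1(Y^\circ,\Omega^1_{Y^\circ})\cong H^1(X^\circ,\pi^\circ_*\Omega^1_{Y^\circ})\) via finiteness, followed by the \etale\ isomorphism. I must also check that \(m\) restricts to \(m^\circ\), the isomorphism onto the centralizer from Lemma~\ref{lem:centralizer}. The first follows from the explicit description of \(\lambda_\pi\) off \(\Delta\) in Lemma~\ref{lem:log-coefficient-map}. The second holds by definition. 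The rest is naturality of \(H^1\) under restriction and pushforward of coefficients.
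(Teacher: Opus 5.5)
Your proposal is correct and follows the paper's proof: the global identity is quoted from Proposition~\ref{prop:global-log-compatibility}, and the restricted chain comes from unwinding $\At^{\log}_\theta(E)=(\mu_\pi)_*(\pi_*\At(L))$ over $X^\circ$, where $\lambda_\pi$ becomes the \etale\ identification, and then invoking Theorem~\ref{thm:etale-invariant}. Your Step~1 (restricting the global identity along $j$, with $k|_{X^\circ}=\operatorname{id}$) is an addition the paper does not spell out: it gives the outer equality directly, it is what justifies the word ``Consequently'' in the statement, and together with Theorem~\ref{thm:etale-invariant} it already suffices, so your unwinding of $\At^{\log}_\theta(E)$ is then an independent check of the middle equality.
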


\begin{proof}
The global equality is exactly Proposition~\ref{prop:global-log-compatibility}.  We prove the restriction statement. By Definition~\ref{def:log-atiyah-class},
\[
\At^{\log}_{\theta}(E)
=
(\mu_\pi)_*(\pi_*\At(L)).
\]
On \(X^\circ\), the map \(\lambda_\pi\) of Lemma~\ref{lem:log-coefficient-map} becomes
the \etale\ identification
\[
\pi^\circ_*\Omega_{Y^\circ}^1
\cong
\Omega_{X^\circ}^1\otimes \pi^\circ_*\cO_{Y^\circ}.
\]
Therefore, after applying the restricted inclusion
\[
q^\circ:\mathcal Z_\theta|_{X^\circ}\hookrightarrow \End(E|_{X^\circ}),
\]
the class \(j^*\At^{\log}_{\theta}(E)\) becomes the image of the scalar Atiyah class of
\(
L^\circ:=L|_{Y^\circ}
\)
under the spectral action
\[
\operatorname{id}\otimes m^\circ:
\Omega_{X^\circ}^1\otimes \pi^\circ_*\cO_{Y^\circ}
\longrightarrow
\Omega_{X^\circ}^1\otimes \End(E|_{X^\circ}).
\]
In the notation of Definition~\ref{def:beta-spec-etale}, this gives
\[
(\operatorname{id}\otimes q^\circ)_*j^*\At^{\log}_{\theta}(E)
=
(\operatorname{id}\otimes m^\circ)_*
\beta^\circ_{\mathrm{spec}}(L,\,\pi).
\]
By Theorem~\ref{thm:etale-invariant},
\(
(\operatorname{id}\otimes m^\circ)_*
\beta^\circ_{\mathrm{spec}}(L,\,\pi)
=
j^*\At(E).
\)
Hence
\[
(\operatorname{id}\otimes q^\circ)_*j^*\At^{\log}_{\theta}(E)
=
j^*\At(E)
\]
in
\(
H^1\bigl(X^\circ,\Omega_{X^\circ}^1\otimes \End(E|_{X^\circ})\bigr).
\)
This proves the theorem.
\end{proof}

\section*{Declarations}
\subsection{Funding} The authors received no financial support for the research, authorship, and/or publication of this paper.

\subsection{Conflicts of Interest} The authors have no competing interests to declare that are relevant to the content of this paper. No data are associated with this paper.

\subsection{Data Availability Statement} Data sharing not applicable to this article as no datasets were generated or analysed during the current study.
\bibliographystyle{alpha}
\bibliography{references}
\end{document}